\documentclass[oupthm]{CUP-JNL-BCM}%

\usepackage{graphicx}
\usepackage{multicol,multirow}
\usepackage{amsmath,amssymb,amsfonts}
\usepackage{mathrsfs}
\usepackage{rotating}
\usepackage{appendix}
\usepackage[numbers,sort&compress]{natbib}
\usepackage[T5,T1]{fontenc}

\theoremstyle{oupplain}
\newtheorem{thm}{Theorem}[section]
\newtheorem{lemma}[thm]{Lemma}
\newtheorem{cor}[thm]{Corollary}
\newtheorem{prop}[thm]{Proposition}

\theoremstyle{oupdefinition}

\theoremstyle{oupremark}
\newtheorem{rmk}[thm]{Remark}

\newtheorem{ex}[thm]{Example}
\theoremstyle{oupproof}
\newtheorem{proof}{Proof}

\numberwithin{equation}{section}

\articletype{RESEARCH ARTICLE}
\jname{Canadian Mathematical Society}
\jyear{2026}

\newcommand{\Z}{\mathbb{Z}}
\newcommand{\D}{\mathbb{D}}
\renewcommand{\L}{\mathbb{L}}
\newcommand{\id}{\mathrm{id}}
\newcommand{\op}{^\mathrm{op}}
\newcommand{\inv}{^{-1}}
\newcommand{\pinv}{^{\pm 1}}
\newcommand{\avg}{_{\mathrm{mid}}}
\DeclareMathOperator{\Alex}{Alex}
\DeclareMathOperator{\FAM}{FAM}
\DeclareMathOperator{\FML}{FML}
\DeclareMathOperator{\FMC}{FMC}
\DeclareMathOperator{\Aut}{Aut}
\renewcommand{\phi}{\varphi}
\newcommand{\bij}{\xrightarrow{\sim}}
\newcommand{\surj}{\twoheadrightarrow}
\newcommand{\inj}{\hookrightarrow}
\newcommand{\AM}{\mathsf{AffMod}}
\newcommand{\Med}{\mathsf{MLQnd}}
\newcommand{\Comm}{\mathsf{MCQnd}}

\begin{document}

\begin{Frontmatter}

\title[On Medial Latin Quandles and Affine Modules]{On medial Latin quandles and affine modules}

\author{L{\fontencoding{T5}\selectfont\d\uhorn}c Ta}

\authormark{L. Ta}

\address{\orgname{Department of Mathematics, University of Pittsburgh}, \orgaddress{\street{Pittsburgh}, \state{PA}, \postcode{15260}}\email{ldt37@pitt.edu}}

\keywords[2020 Mathematics Subject Classification]{Primary 20N02, 13C13; Secondary 08A05, 13C60, 57K12}

\keywords{affine module, Alexander quandle, commutative quandle, dual rack, medial, quasigroup}

\abstract{In this note, we show that the category of Latin (resp.\ commutative) medial quandles is equivalent to the category of affine modules over a certain Laurent polynomial ring (resp.\ the dyadic rationals). As applications, we describe free objects in these categories and obtain a structure theorem for finitely generated medial commutative quandles. We also characterize racks whose duals are commutative. Collectively, this solves two open problems of Bardakov and Elhamdadi \cite{BE}.}

\end{Frontmatter}

\section{Introduction}
In this note, we show that the categories of medial Latin quandles $\Med$ and medial commutative quandles $\Comm$ are equivalent to the categories of affine modules $\AM_\L$ and $\AM_\D$ over the rings
\[
\L\coloneq \Z[t\pinv, (1-t)\inv]\qquad \text{and } \qquad\D\coloneq\Z[1/2],
\]
respectively; see Theorem \ref{thm:main} (resp.\ Corollary \ref{cor:main}). As applications, we describe free objects in these categories (see Proposition \ref{prop:fml} and its preceding discussion) and prove a structure theorem for finitely generated commutative medial quandles (Theorem \ref{thm:structure}). Together with Corollaries \ref{cor:cocomm} and \ref{cor:free-comm}, this solves two problems of Bardakov and Elhamdadi \cite{BE} from the theory of quandle rings.

\subsection{Motivations}
In 1982, Joyce \cite{joyce} and Matveev \cite{matveev} independently introduced algebraic structures called \emph{quandles} to construct knot invariants. Quandles generalize \emph{kei}, which Takasaki \cite{takasaki} introduced in 1943 as abstractions of symmetric spaces. Quandles form a special subvariety of \emph{racks}, which Fenn and Rourke \cite{fenn} introduced in 1992. See \cite{book,quandlebook} for introductions to the theory.

\emph{Medial} quandles, a special class of quandles of significant interest in the literature, enjoy rich algebraic properties (see \cite{jedlicka,ta2}) and connections to Alexander invariants of knots (see \cite{joyce,matveev}). On the other hand, \emph{Latin} quandles and \emph{commutative} quandles are important to the theory of \emph{quandle rings}, which are nonassociative analogues of group rings used to enhance coloring invariants of knots; see \cite{BE} for references. In particular, medial commutative quandles form an admissible subvariety $\Comm$ for the categorical theory of central extensions (see \cite{even}), making it crucial to better understand the structure of $\Comm$ as a category. Sangare \cite{sangare1,sangare2} recently used the results of the present paper to study automorphisms and antiautomorphisms of medial commutative quandles.

This note offers a categorical perspective on the correspondence between medial Latin quandles (resp.\ medial commutative quandles) and affine modules over the Laurent polynomial ring $\L$ (resp.\ the dyadic rationals $\D$), which was previously studied in \cite{jedlicka,bauer,kepka}. As an application, we solve two problems of Bardakov and Elhamdadi \cite[][Questions 7.1 and 7.3]{BE} from the theory of quandle rings.

\subsection{Structure of the paper}

In Section \ref{sec:2}, we define several classes of quandles, including \emph{Alexander quandles} $\Alex(A,\phi)$ and \emph{midpoint quandles} $M\avg$, and we define the category of affine modules $\AM_R$ over a ring $R$.

In Section \ref{sec:3}, we solve \cite[][Question 7.1]{BE}. In particular, we discuss commutative quandles that are not medial, and we completely characterize racks (see Theorem \ref{thm:cocomm}) and commutative quandles (see Corollary \ref{cor:cocomm}) $X$ whose duals $X\op$ are commutative.

In Section \ref{sec:4}, we provide short new proofs that all medial Latin quandles (resp.\ medial commutative quandles) are isomorphic to certain Alexander quandles (resp.\ midpoint quandles); see Proposition \ref{prop:every-ML} (resp.\ Corollary \ref{cor:every-MC}).

In Section \ref{sec:5}, we categorify the results of Section \ref{sec:4} by showing that ``taking the Alexander quandle'' (resp.\ the midpoint quandle) defines an equivalence of categories $\Alex\colon \AM_\L\bij\Med$ (resp.\ $\operatorname{mid}\colon \AM_\D\bij\Comm$); see Theorem \ref{thm:main} (resp.\ Corollary \ref{cor:main}).

In Section \ref{sec:6}, we characterize all free objects in the categories studied in Section \ref{sec:5} (see Proposition \ref{prop:fml} and its preceding discussion) and certain free objects in the category of commutative quandles (see Corollary \ref{cor:free-comm}). We deduce a structure theorem (Theorem \ref{thm:structure}) for finitely generated medial commutative racks. This solves Question 7.3 and resolves a conjecture in Question 7.1 of \cite{BE}.

Throughout this note, all abelian groups are notated additively with identity element $0$.

\subsection*{Acknowledgments} This note is adapted from \cite{ta}. I thank Valeriy Bardakov and Mohamed Elhamdadi for helpful comments and their suggestion to expand \cite{ta} into the current paper.

\section{Preliminaries}\label{sec:2}

\subsection{Quandles}
Recall that a \emph{magma} is a pair $(X,\ast)$ where $X$ is a set and $\ast\colon X^2\to X$ is a binary operation. If $(X,\ast)$ and $(Y,\cdot)$ are magmas, then a \emph{homomorphism} is a function $f\colon X\to Y$ such that $f(w\ast x)=f(w)\cdot f(x)$ for all $w,x\in X$. For all $x\in X$, define the corresponding \emph{right multiplication map} $R_x\colon X\to X$ by $y\mapsto y\ast x$, and define the \emph{left multiplication map} $L_x\colon X\to X$ by $y\mapsto x\ast y$.

In the following, let $(X,\ast)$ be a magma. We consider several important varieties of magmas.
\begin{itemize}
    \item We say that $(X,\ast)$ is \emph{idempotent} if $x\ast x=x$ for all $x\in X$.
    \item We say that $(X,\ast)$ is a \emph{rack} if, for all $x\in X$, the right-multiplication map $R_x$ is an automorphism of $X$. In particular, $\ast$ distributes over itself on the right. 
    \item \emph{Quandles} are idempotent racks. \emph{Kei} are quandles such that, for all $x\in X$, the right multiplication map $R_x$ is an involution.
    \item Equip $X^2$ with the product magma structure. We say that $(X,\ast)$ is \emph{medial} if $\ast\colon X^2\to X$ is a magma homomorphism. Several authors prefer the terms \emph{abelian} or \emph{entropic}.
    \item We say that $(X,\ast)$ is \emph{Latin} or a \emph{left quasigroup} if, for all $x\in X$, the left multiplication map $L_x$ is a permutation.
    \item We say that $(X,\ast)$ is \emph{commutative} or \emph{symmetric} if $x\ast y=y\ast x$ for all $x,y\in X$; that is, $L_x=R_x$ for all $x\in X$.
\end{itemize}

The following is well-known and straightforward.

\begin{lemma}\label{lem:comm-latin}
    All Latin racks are quandles. Moreover, all commutative racks are Latin quandles.
\end{lemma}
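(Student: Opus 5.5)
We prove both assertions directly from the definitions, using only right distributivity and the fact that each $R_x$ is a bijection.

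\emph{Latin racks are quandles.} Let $X$ be a Latin rack and fix $x\in X$. Since $R_x$ is an automorphism, right distributivity gives, for every $y\in X$,
\[
(x\ast x)\ast y = (x\ast y)\ast(x\ast y)\quad\text{after a substitution}\ldots
\]
It is cleaner to use the identity $R_{x\ast x} = R_x R_x R_x\inv = R_x$. This identity comes from $R_{a\ast b}=R_bR_aR_b\inv$, which is itself right distributivity rewritten as $(y\ast b\inv)\ast a)\ast b=y\ast(a\ast b)$ with $b\inv$ denoting $R_b\inv$. Hence $y\ast(x\ast x)=y\ast x$ for all $y$, that is, $L_y(x\ast x)=L_y(x)$. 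Since $L_y$ is injective by the Latin hypothesis, $x\ast x=x$. So $X$ is idempotent and hence a quandle.

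\emph{Commutative racks are Latin quandles.} If $X$ is commutative then $L_x=R_x$ for all $x$. Each $R_x$ is a permutation, so each $L_x$ is a permutation and $X$ is Latin. By the first part, $X$ is then a quandle.

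The only step needing care is the conjugation identity $R_{a\ast b}=R_bR_aR_b\inv$. We verify it on an element $y$ by writing $y=z\ast b$, which is possible because $R_b$ is surjective. Then
\[
y\ast(a\ast b)=(z\ast b)\ast(a\ast b)=(z\ast a)\ast b
\]
by right distributivity, and $(z\ast a)\ast b$ is exactly $R_bR_aR_b\inv(y)$. Everything else is immediate.
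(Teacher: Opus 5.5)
Your proof is correct. The paper gives no proof of this lemma (it calls it well-known and straightforward), and your argument is the standard one. The conjugation identity $R_{a\ast b}=R_bR_aR_b\inv$, which your last paragraph verifies properly using surjectivity of $R_b$ and right self-distributivity, gives $R_{x\ast x}=R_x$. Hence $L_y(x\ast x)=L_y(x)$ for any $y$ (for instance $y=x$), and injectivity of $L_y$ forces $x\ast x=x$. The commutative case then follows from $L_x=R_x$. In a write-up, delete the abandoned first display and the garbled formula involving $b\inv$, since your final paragraph already supplies the clean verification.
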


\subsubsection{}
Two auxiliary results of this paper (Proposition \ref{prop:every-ML} and Corollary \ref{cor:every-MC}) state that all medial Latin quandles and medial commutative quandles fall into the following two classes of examples.

\begin{ex}[\cite{nelson,jedlicka}]\label{ex:alex}
    \emph{Alexander quandles,} also called \emph{affine quandles,} form an important class of medial quandles. Given an automorphism $\phi$ of an abelian group $A$, define the Alexander quandle $\Alex(A,\phi)\coloneq(A,\ast)$ by
\begin{equation}\label{eq:alex}
    x\ast y\coloneq \phi(x)+(\id-\phi)(y).
\end{equation}
Note that $\Alex(A,\phi)$ is Latin if and only if $\id-\phi$ is an automorphism of $A$; cf.\ \cite{jedlicka}. Actually, every medial Latin quandle is isomorphic to an Alexander quandle of this form; see Proposition \ref{prop:every-ML}.
\end{ex}

\begin{ex}
    Our prototypical examples of commutative quandles generalize the commutative quandles constructed in \cite[][Ex.\ 5.1]{BE}; we call these \emph{midpoint quandles}.
    Let $R$ be a unital ring in which $2$ is invertible, and let $M$ be a left $R$-module. (Note that $M$ can also be viewed as a module over $\D$.) We define the midpoint quandle $M\avg\coloneq (M,\ast)$ by
    \[
    x\ast y\coloneq \frac{1}{2}(x+y),
    \]
    which is commutative. 
    In particular, let $n\geq 0$, let $R\coloneq \D$, and let $M\coloneq\Z/(2n+1)$ be the cyclic group of order $2n+1$. Then we call $C_{2n+1}\coloneq M\avg$ the \emph{cyclic midpoint quandle} of order $2n+1$. Note that this definition of $C_{2n+1}$ coincides with the one in \cite{BE} because $2\inv=n+1$ in $M$.
\end{ex}

\begin{rmk}\label{rmk:medial}
    Midpoint quandles are a special class of so-called \emph{weighted average quandles} (see \cite{burrows}), which are themselves a special class of Alexander quandles. Indeed, if $M\avg$ is a midpoint quandle, then multiplication by $1/2$ is an $R$-module automorphism of $M$ whose corresponding Alexander quandle is precisely $M\avg$. The converse also holds. In particular, all midpoint quandles are medial. 
\end{rmk}

\subsection{Affine modules}
Given a ring $R$, we define the category $\AM_R$ of \emph{affine modules} over $R$ (also called \emph{affine $R$-modules}) as follows. The objects of $\AM_R$ are left $R$-modules along with the empty set.\footnote{We include the empty set in $\AM_R$ because the categories in Remark \ref{rmk:lawvere}, Theorem \ref{thm:main}, and Corollary \ref{cor:main} also include the empty set as an object.} The morphisms are sums of $R$-module homomorphisms and constant maps, which we call \emph{affine transformations.}

The following is straightforward.
\begin{lemma}\label{lemma:ess-inj}
    The inclusion functor from the category of left $R$-modules into $\AM_R$ is essentially injective.
\end{lemma}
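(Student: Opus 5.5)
Here "essentially injective" means injective on isomorphism classes of objects: if two left $R$-modules $M$ and $N$ are isomorphic in $\AM_R$, then they are already isomorphic as $R$-modules. The plan is to take an affine isomorphism and strip off its constant part.

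Let $f\colon M\to N$ be an isomorphism in $\AM_R$ with inverse $g\colon N\to M$. Write $f=\lambda+c$, where $\lambda\colon M\to N$ is an $R$-module homomorphism and $c\in N$ is a constant. This decomposition is unique: evaluating at $0$ gives $c=f(0)$, and then $\lambda=f-f(0)$. Similarly write $g=\mu+d$ with $\mu\colon N\to M$ linear and $d=g(0)\in M$. For all $x\in M$,
\[
x=g(f(x))=\mu(\lambda(x))+\mu(c)+d .
\]
Setting $x=0$ gives $\mu(c)+d=0$, so $\mu\circ\lambda=\id_M$. The same argument applied to $f\circ g=\id_N$ gives $\lambda\circ\mu=\id_N$. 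Hence $\lambda$ is an $R$-module isomorphism $M\bij N$.

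The only remaining point is that the sum of affine transformations is well defined, i.e.\ the constant part of a morphism is determined. This is exactly the evaluation at $0$ used above, so there is no real obstacle.

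If one also wants to account for the empty set: it is not a module, so it lies outside the image of the inclusion functor. In any case, the empty set is isomorphic in $\AM_R$ only to itself, since there is no map from a nonempty set to $\emptyset$.
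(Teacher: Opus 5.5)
Your proof is correct: writing $f=\lambda+c$ and $g=\mu+d$ and evaluating $g\circ f=\id_M$ and $f\circ g=\id_N$ at $0$ shows that the linear parts $\lambda$ and $\mu$ are mutually inverse $R$-module isomorphisms. The paper gives no proof and calls the lemma straightforward; your argument is presumably the intended one (equivalently, $\lambda=f-f(0)$ is a bijective $R$-linear map).
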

In other words, two nonempty affine $R$-modules are isomorphic if and only if they are isomorphic as left $R$-modules in the usual sense.

\subsubsection{}
Given a ring homomorphism $R\to S$, we can define the \emph{restriction of scalars} functor $\operatorname{res}\colon\AM_S\to\AM_R$ in exactly the same way as in ordinary module theory. In particular, we make the following observation.

\begin{lemma}\label{lemma:res}
    Let $R\surj S$ be a surjective ring homomorphism. Then the restriction of scalars functor $\operatorname{res}\colon\AM_S\inj\AM_R$ is fully faithful.
\end{lemma}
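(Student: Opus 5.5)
Let $\pi\colon R\surj S$ be the given surjection. The functor $\operatorname{res}$ sends an affine $S$-module $M$ to the same underlying set, with the $R$-action $r\cdot m\coloneq\pi(r)m$; the empty set goes to the empty set. On morphisms it is the identity on underlying functions. This is well defined because every $S$-linear map is $R$-linear for the restricted action, and constant maps stay constant. Since $\operatorname{res}$ does not change underlying functions, faithfulness is immediate.

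For fullness, I take affine $S$-modules $M,N$ and an affine transformation $f\colon\operatorname{res}M\to\operatorname{res}N$ of $R$-modules, and show that $f$ is already an affine transformation of $S$-modules.

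First I dispose of the empty set. If $M=\emptyset$, then $f$ is the empty map, which is a morphism in $\AM_S$. If $M\neq\emptyset$, then $N\neq\emptyset$, since a map from a nonempty set must land somewhere.

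Now suppose both are nonempty. By definition I can write $f=g+c$, where $g\colon M\to N$ is $R$-linear and $c\in N$ is a constant. Then $c=f(0)$ and $g=f-f(0)$, so this decomposition is determined by $f$. It remains to check that $g$ is $S$-linear. Additivity is inherited from $R$-linearity. For $s\in S$, surjectivity gives some $r\in R$ with $\pi(r)=s$, and then
\[
g(sm)=g(r\cdot m)=r\cdot g(m)=s\,g(m).
\]
Hence $g$ is $S$-linear, so $f=g+c$ is a morphism of $\AM_S$ whose restriction is the original map.

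The main point is the use of surjectivity to lift each scalar $s$ to some $r$. The only other care needed is the convention about the empty object and the fact that an affine map splits uniquely as $f-f(0)$ plus the constant $f(0)$.
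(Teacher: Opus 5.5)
Your proof is correct and is the routine verification the paper has in mind: the paper states this lemma as an observation without giving a proof. Your argument supplies it, with faithfulness because underlying maps are unchanged, fullness by lifting each $s\in S$ to a preimage in $R$ to see that the linear part $f-f(0)$ is $S$-linear, and the empty-set convention handled separately.
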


\begin{rmk}\label{rmk:lawvere}
    Our definition of $\AM_R$ is one of many equivalent ways to define the category of affine $R$-modules. The usual definition (as in \cite{affine,lawvere}) is as the category $\mathcal{A}$ of set-theoretic models of the Lawvere theory whose $n$-ary operations are precisely the $n$-tuples $(r_1,\dots,r_n)\in R^n$ such that $\sum^n_{i=1}r_i=1$; such an operation represents linear combinations $(x_1,\dots,x_n)\mapsto\sum^n_{i=1}r_ix_i$. In particular, the empty set is an object in $\mathcal{A}$ due to the lack of a nullary operation.

    For fun, we encourage the reader to verify that the forgetful functor $\AM_R\to \mathcal{A}$ is an equivalence of categories. As a hint, essential surjectivity is shown as follows. Given a nonempty object $M\in\mathcal{C}$, fix a basepoint $0\in M$. Define the following left $R$-module structure $(\oplus,\cdot)$ on $M$, where $0$ plays the role of the identity element of $(M,\oplus)$ as an abelian group:
    \[
    x\oplus y\coloneq 1x+1y-1(0), \qquad r\cdot x\coloneq rx+(1-r)0.
    \]
\end{rmk}

\section{Commutative quandles}\label{sec:3}
In this section, we solve \cite[][Question 7.1]{BE}. 

\subsection{Non-mediality}
The first part of \cite[][Question 7.1]{BE} asks whether every finite commutative quandle can be written as a direct sum of cyclic midpoint quandles $C_{2n+1}$. This holds for all finite medial commutative quandles (see Theorem \ref{thm:structure}) but not for all finite commutative quandles.

\subsubsection{}
To give a negative answer to the first part of \cite[][Question 7.1]{BE}, it suffices to find a finite commutative quandle that is not medial. This is because cyclic midpoint quandles are medial (see Remark \ref{rmk:medial}), and the direct sum of medial quandles is also medial. 

Indeed, Kepka and N\v emec \cite[][Cor.\ 12.5]{moufang} constructed such two such quandles in 1981.\footnote{In \cite{moufang}, Kepka and N\v emec called these algebraic structures \emph{commutative distributive quasigroups.} One can use Lemma \ref{lem:comm-latin} to show that commutative distributive quasigroups are the same as commutative quandles; cf.\ \cite{stanovsky}.} They called these quandles $D(1)$ and $D(3)$; these quandles are the first and third entries of \cite[][Ex.\ 3.4]{stanovsky}. We refer the reader to these two articles or the author's blog post \cite[][Sec.\ 3.1]{ta} for a detailed description of these quandles and their failure to be medial. 

It is worth noting that $D(1)$ and $D(3)$ are quandles of order 81, and no other non-medial commutative quandles of order up to 81 exist \cite[][Cor.\ 12.5]{moufang}. Therefore, Theorem \ref{thm:structure} provides a positive answer to the first part of \cite[][Question 7.1]{BE} for all commutative quandles up to order 80.

\subsection{Cocommutative quandles}
The second part of \cite[][Question 7.1]{BE} seeks a description of commutative quandles whose duals are commutative. Recall that if $(X,\ast)$ is a rack, then the \emph{dual rack} $X\op\coloneq (X,\overline{\ast})$ is the rack defined by
\[
x\mathop{\overline{\ast}}y\coloneq R_y\inv(x).
\]
See \cite{burrows} for a proof that $X\op$ is a rack. 

We will say that a rack $(X,\ast)$ is \emph{cocommutative} if $X\op$ is commutative. We completely characterize cocommutative racks and, in particular, commutative cocommutative quandles. This solves \cite[][Question 7.1]{BE}.

\begin{thm}\label{thm:cocomm}
    Let $(X,\ast)$ be a rack. The following are equivalent:
    \begin{enumerate}
        \item\label{item:cocomm} $X$ is cocommutative.
        \item\label{item:cocomm-qnd} $X$ is a cocommutative quandle.
        \item\label{item:lx} For all $x\in X$, the left multiplication map $L_x$ is an involution.
    \end{enumerate}
\end{thm}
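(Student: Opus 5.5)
Set $\overline{R}_y\coloneq R_y\inv$, so that $x\mathop{\overline{\ast}}y=\overline{R}_y(x)$. Cocommutativity of $X$ then says exactly that $R_y\inv(x)=R_x\inv(y)$ for all $x,y\in X$. I will prove the cycle \eqref{item:cocomm}$\Rightarrow$\eqref{item:cocomm-qnd}$\Rightarrow$\eqref{item:lx}$\Rightarrow$\eqref{item:cocomm}.

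\textbf{\eqref{item:cocomm}$\Rightarrow$\eqref{item:cocomm-qnd}.} The dual $X\op$ is a rack and, by hypothesis, commutative. By Lemma \ref{lem:comm-latin}, $X\op$ is therefore an idempotent (Latin) quandle, so $R_x\inv(x)=x$ for all $x$. Applying $R_x$ gives $x\ast x=x$, so $X$ is a quandle.

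\textbf{\eqref{item:cocomm-qnd}$\Rightarrow$\eqref{item:lx}.} Fix $x,y$ and put $z\coloneq x\ast y=R_y(x)$, so that $x=R_y\inv(z)$. Cocommutativity gives $R_y\inv(z)=R_z\inv(y)$. Hence $x=R_z\inv(y)$, i.e.\ $x\ast z=y$, i.e.\ $x\ast(x\ast y)=y$. This says $L_x^2=\id$, so each $L_x$ is an involution. Idempotency is not even needed for this step.

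\textbf{\eqref{item:lx}$\Rightarrow$\eqref{item:cocomm}.} Assume $x\ast(x\ast y)=y$ for all $x,y$. We must show $R_y\inv(x)=R_x\inv(y)$. Write $u\coloneq R_y\inv(x)$, so that $u\ast y=x$; the goal is $y=u\ast x$, i.e.\ $u\ast(u\ast y)=y$. This is exactly the involution property applied with $u$ in place of $x$.

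The argument is purely formal, so I expect no real obstacle. The only care needed is in the directions of $R_y\inv$ versus $R_x\inv$ when translating between $\overline{\ast}$ and $\ast$, and in noting that Lemma \ref{lem:comm-latin} applies to $X\op$ because $X\op$ is a rack.
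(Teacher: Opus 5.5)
Your proof is correct and follows the paper's cycle (1)$\Rightarrow$(2)$\Rightarrow$(3)$\Rightarrow$(1). Your arguments for (1)$\Rightarrow$(2) and (3)$\Rightarrow$(1) are the paper's; the latter is the computation $R_x(u)=u\ast(u\ast y)=y$ for $u=R_y^{-1}(x)$.

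The step that differs is (2)$\Rightarrow$(3). The paper sets $z=R_y^{-1}(x)=R_x^{-1}(y)$ and computes $x\ast(x\ast y)=(z\ast y)\ast(x\ast y)=(z\ast x)\ast y=y\ast y=y$. This uses right self-distributivity and then idempotency, so the quandle hypothesis from (2) is actually consumed there. You instead substitute $z=x\ast y$ into the cocommutativity identity $R_y^{-1}(z)=R_z^{-1}(y)$. That needs only the definition of $X^{\mathrm{op}}$ and the bijectivity of $R_y$, so your remark that idempotency is unnecessary is right.

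Your route makes the logical structure more transparent. Since $u\mapsto u\ast y$ is a bijection, the identities $R_y^{-1}(x)=R_x^{-1}(y)$ and $u\ast(u\ast y)=y$ are the same statement under the reparametrization $x=u\ast y$. Hence (1)$\Leftrightarrow$(3) holds directly for any rack. Condition (2) then enters only as a consequence of (1) via Lemma~\ref{lem:comm-latin}, not as a needed intermediate step.
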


\begin{proof}
    First, note that condition \eqref{item:lx} is equivalent to the statement that, for all $x,y\in X$,
    \begin{equation}\label{eq:lx}
        x\ast (x\ast y)=y.
    \end{equation}

    \eqref{item:cocomm}$\implies$\eqref{item:cocomm-qnd}: If $X$ is cocommutative, then Lemma \ref{lem:comm-latin} implies that $X\op$ is a quandle. That is, $R_x\inv(x)=x$ for all $x\in X$. Applying $R_x$ to both sides shows that $X$ is also a quandle.

    \eqref{item:cocomm-qnd}$\implies$\eqref{item:lx}: Suppose that $X$ is a cocommutative quandle. Given $x,y\in X$, let $z\coloneq R_y\inv(x)$, so $z=R_x\inv(y)$ by assumption. Since $X$ is a quandle, it follows that
    \[
    x\ast(x\ast y)=(z\ast y)\ast(x\ast y)=(z\ast x)\ast y=y\ast y=y,
    \]
    which verifies \eqref{eq:lx}.

    \eqref{item:lx}$\implies$\eqref{item:cocomm}: Given $x,y\in X$, let $z\coloneq R_y\inv(x)$. We have to show that $z=R_x\inv(y)$. Indeed,
    \[
    R_x(z)=R_{R_y(z)}(z)=z\ast (z\ast y)=y.
    \]
    In the last equality, we have used \eqref{eq:lx}. Since $R_x$ is invertible, the proof is complete.
\end{proof}

\begin{cor}
    Every cocommutative rack is a Latin quandle.
\end{cor}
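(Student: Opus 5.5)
The plan is to read this off Theorem \ref{thm:cocomm} directly. Let $X$ be a cocommutative rack.

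First, the implication \eqref{item:cocomm}$\implies$\eqref{item:cocomm-qnd} of Theorem \ref{thm:cocomm} shows that $X$ is a quandle. So it only remains to show that $X$ is Latin, that is, that every left multiplication map $L_x$ is a permutation of $X$.

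For this, the implication \eqref{item:cocomm}$\implies$\eqref{item:lx} of Theorem \ref{thm:cocomm} says that each $L_x$ is an involution. Concretely, $L_x\circ L_x=\id$ by \eqref{eq:lx}. An involution is its own inverse, so $L_x$ is a bijection, and therefore $X$ is Latin.

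I would also note an alternative route: since $X\op$ is commutative, Lemma \ref{lem:comm-latin} makes $X\op$ a Latin quandle. One could then try to transfer the Latin property back to $X$. However, this requires relating the left multiplications of $X$ to those of $X\op$, which is less direct. The involution argument avoids that step entirely, so there is essentially no obstacle. The only point needing care is that both parts of the conclusion, being a quandle and being Latin, are genuinely delivered by Theorem \ref{thm:cocomm}, and they are.
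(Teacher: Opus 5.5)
Your argument is correct and matches the paper: the corollary is stated without a separate proof as an immediate consequence of Theorem \ref{thm:cocomm}, where item \eqref{item:cocomm-qnd} gives the quandle property and item \eqref{item:lx} makes each $L_x$ an involution, hence a permutation. The quandle part is actually redundant, since Lemma \ref{lem:comm-latin} already says every Latin rack is a quandle.
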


\begin{cor}[{cf.\ \cite[][Question 7.1]{BE}}]\label{cor:cocomm}
    Let $(X,\ast)$ be a commutative quandle. Then $X$ is cocommutative if and only if $X$ is a kei.
\end{cor}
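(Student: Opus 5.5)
The plan is to apply Theorem \ref{thm:cocomm} directly. For a commutative quandle we have $L_x=R_x$ for every $x\in X$, so the conditions in the theorem and in the definition of a kei should coincide.

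First, suppose $X$ is cocommutative. By the implication \eqref{item:cocomm}$\implies$\eqref{item:lx} of Theorem \ref{thm:cocomm}, every left multiplication map $L_x$ is an involution. Commutativity gives $R_x=L_x$, so every $R_x$ is an involution. Since $X$ is a quandle by hypothesis, $X$ is a kei.

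Conversely, suppose $X$ is a kei. Then each $R_x$ is an involution. Commutativity gives $L_x=R_x$, so each $L_x$ is an involution, which is condition \eqref{item:lx}. The implication \eqref{item:lx}$\implies$\eqref{item:cocomm} of Theorem \ref{thm:cocomm} then shows that $X$ is cocommutative.

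There is no real obstacle here. The only point to check is that the notion of involution matches in both places: in the definition of a kei it means $R_x^2=\id$, and in condition \eqref{item:lx} it means $L_x^2=\id$. Under commutativity these two statements are identical.
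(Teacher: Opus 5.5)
Your proposal is correct and is essentially the paper's own argument: the paper likewise notes that commutativity gives $L_x=R_x$ for all $x\in X$, so condition \eqref{item:lx} of Theorem~\ref{thm:cocomm} coincides with the kei condition. Applying the theorem in both directions then gives the result.
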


\begin{proof}
    Since $X$ is commutative, $L_x=R_x$ for all $x\in X$. Hence, Theorem \ref{thm:cocomm} provides the claim.
\end{proof}

\begin{rmk}
    Commutative kei are the same as \emph{distributive Steiner quasigroups,} which are algebraic structures that correspond to combinatorial designs called \emph{Hall triple systems}. See \cite[][Sec.\ 3.4]{involutory} for further discussion and references on distributive Steiner quasigroups, and see \cite{kei} for a more quandle-theoretic treatment of commutative kei.
\end{rmk}

\section{Classification of medial Latin quandles}\label{sec:4}

In preparation for Theorem \ref{thm:main} and Corollary \ref{cor:main}, we completely describe medial Latin quandles and medial commutative quandles. The following results were shown in \cite{jedlicka,moufang,bauer}, sometimes in more esoteric forms; we reformulate these results in terms of quandles. We also provide much shorter proofs using the classical Bruck--Murdoch--Toyoda theorem from quasigroup theory.

\subsection{}
Recall that a \emph{quasigroup} is a magma such that all multiplication maps $L_x$ and $R_x$ are permutations. In particular, all Latin quandles and (by Lemma \ref{lem:comm-latin}) commutative quandles are quasigroups. 

In the medial case, this observation allows us to appeal to the classical Bruck--Murdoch--Toyoda theorem, which states the following: for every medial quasigroup $(Q,\ast)$, there exists an abelian group $A$, a fixed element $z\in A$, and two commuting automorphisms $\phi$ and $\psi$ of $A$ such that $(Q,\ast)$ is isomorphic to the medial quasigroup $(A,\cdot)$ defined by
\begin{equation}\label{eq:BMT}
    x\cdot y\coloneq\phi(x)+\psi(y)+z.
\end{equation}

\begin{prop}[{\cite[][Ex.\ 2.2 and Cor.\ 3.4]{jedlicka}}]\label{prop:every-ML}
    Every medial Latin quandle is isomorphic to an Alexander quandle $\Alex(A,\phi)$ such that the map $\id-\phi$ is an automorphism of $A$.
\end{prop}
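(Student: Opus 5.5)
Since a medial Latin quandle $(X,\ast)$ has every $R_x$ a permutation (rack axiom) and every $L_x$ a permutation (Latin), it is a medial quasigroup. The plan is to apply the Bruck--Murdoch--Toyoda theorem and then use idempotence to kill the constant and force $\psi=\id-\phi$. By that theorem we may assume $X=(A,\cdot)$ with
\[
x\cdot y=\phi(x)+\psi(y)+z,
\]
where $\phi,\psi$ are commuting automorphisms of the abelian group $A$ and $z\in A$ is fixed.

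First, we use idempotence. Setting $x=y=0$ gives $0=0\cdot 0=z$, so $z=0$. Then, for arbitrary $x$, we get $x=x\cdot x=\phi(x)+\psi(x)$, hence $\psi=\id-\phi$ as endomorphisms of $A$. Therefore $x\cdot y=\phi(x)+(\id-\phi)(y)$, which is exactly the operation \eqref{eq:alex}, so $X\cong\Alex(A,\phi)$.

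Finally, the extra condition is automatic: $\id-\phi=\psi$ is an automorphism of $A$ by the Bruck--Murdoch--Toyoda theorem. Alternatively, this follows from the remark in Example \ref{ex:alex} that $\Alex(A,\phi)$ is Latin if and only if $\id-\phi$ is an automorphism.

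There is no real obstacle. The only points needing care are to confirm that "quasigroup" here requires only that each $L_x$ and $R_x$ be a bijection, which the rack and Latin axioms supply, and to note that we may transport the isomorphism so that the evaluation at $0$ makes sense.
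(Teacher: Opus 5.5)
Your proposal is correct and matches the paper's proof essentially step for step. You view the quandle as a medial quasigroup, apply Bruck--Murdoch--Toyoda, use idempotence at $x=y=0$ to get $z=0$, and then use idempotence in general to force $\psi=\id-\phi$. Your remark that $\id-\phi=\psi$ is already an automorphism by the theorem is a slightly more direct finish than the paper's appeal to the Latin criterion in Example \ref{ex:alex}, but either works.
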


\begin{proof}
    By the above discussion, every medial Latin quandle is isomorphic to a quasigroup of the form $(A,\cdot)$ defined by \eqref{eq:BMT}. By assumption, $(A,\cdot)$ is idempotent, so taking $x\coloneq 0$ and $y\coloneq 0$ in \eqref{eq:BMT} shows that $z=0$. Therefore, idempotence forces $\phi+\psi=\id$; this recovers \eqref{eq:alex}. Hence, $(A,\cdot)=\Alex(A,\phi)$, as desired. Since $\Alex(A,\phi)$ is Latin, Example \ref{ex:alex} completes the proof.
\end{proof}

\begin{cor}[{\cite{bauer}, \cite[][Prop.\ 12.7]{moufang}}]\label{cor:every-MC}
    Every medial commutative quandle is isomorphic to a midpoint quandle.
\end{cor}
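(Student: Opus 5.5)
By Lemma \ref{lem:comm-latin}, a medial commutative quandle $(X,\ast)$ is a medial Latin quandle. Proposition \ref{prop:every-ML} then gives an isomorphism $X\cong\Alex(A,\phi)$ for some abelian group $A$ and automorphism $\phi$ of $A$ with $\id-\phi$ also an automorphism. My plan is to show that commutativity forces $\phi=\id-\phi$. This means $2\phi=\id$, so $2$ acts invertibly on $A$ with inverse $\phi$. Then $A$ is a $\D$-module on which $\phi$ is multiplication by $1/2$, and $\Alex(A,\phi)=A\avg$ (as in Remark \ref{rmk:medial}).

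The key step is the computation. Since $\Alex(A,\phi)$ is commutative, for all $x,y\in A$ we have
\[
\phi(x)+(\id-\phi)(y)=\phi(y)+(\id-\phi)(x).
\]
Setting $y\coloneq 0$ gives $\phi(x)=(\id-\phi)(x)$ for every $x$. Hence $\phi=\id-\phi$, that is, $2\phi=\id$. Because $\phi$ is an automorphism, multiplication by $2$ on $A$ is a bijection whose inverse is $\phi$.

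Next I will equip $A$ with the $\D$-module structure $(a/2^k)\cdot x\coloneq a\,\phi^k(x)$. This is well defined because $\phi$ is additive and inverts $2$, so $A$ is a left $\D$-module in a unique way extending its $\Z$-module structure. Then $x\ast y=\phi(x)+\phi(y)=\tfrac12(x+y)$, so $\Alex(A,\phi)=A\avg$. Composing with the isomorphism from Proposition \ref{prop:every-ML} gives $X\cong A\avg$.

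I expect no serious obstacle. The only points needing care are that the commutativity identity is transported along the isomorphism, which is immediate, and that a $\Z$-module on which $2$ is invertible is canonically a $\D$-module, which holds because $\D$ is the localization $\Z[1/2]$.
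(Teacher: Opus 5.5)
Your proposal is correct and follows essentially the same route as the paper: reduce to a Latin Alexander quandle via Proposition~\ref{prop:every-ML}, then compare $x\ast 0$ with $0\ast x$ to get $2\phi=\id$, so $A$ is a $\D$-module with $\phi$ acting as multiplication by $1/2$. You add a bit more detail than the paper, namely invoking Lemma~\ref{lem:comm-latin} explicitly and writing out the $\D$-module structure $(a/2^k)\cdot x=a\,\phi^k(x)$, which is well defined because $2\phi=\id$.
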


\begin{proof}
    By Proposition \ref{prop:every-ML}, it suffices to show that every commutative Alexander quandle $\Alex(A,\phi)$ is a midpoint quandle. Indeed, commutativity implies that
    \[
    \phi(x)=x\ast 0=0\ast x=x-\phi(x)
    \]
    for all $x\in A$; that is, $2\phi=\id$. Since $\phi$ is invertible, it follows that multiplication by $2$ is invertible. Hence, $A$ is a module over $\D$, and $\phi$ is multiplication by $1/2$, so $\Alex(A,\phi)=A\avg$.
\end{proof}

\begin{rmk}\label{rmk:midpoint}
    The construction of Bauer in \cite{bauer} can be viewed as a way to recover the midpoint quandle corresponding to a given medial commutative quandle under Corollary \ref{cor:every-MC}. Indeed, one can show that the algebraic structures in Bauer's construction (namely, medial idempotent commutative quasigroups, which Bauer calls \emph{quasigroup midpoint algebras}) are the same as medial commutative quandles. This is the reason we chose the name ``midpoint quandles'' for quandles of the form $M\avg$.
    
    Incidentally, midpoint algebras have certain connections to (affine) modules over $\D$; see \cite{bauer,freyd}. By the above discussion, this suggests medial commutative quandles should also be related to affine modules over $\D$; we formalize this idea using an equivalence of categories in Corollary \ref{cor:main}.
\end{rmk}

\section{Quandles versus affine modules}\label{sec:5}
In this section, we categorify Proposition \ref{prop:every-ML} and Corollary \ref{cor:every-MC}. This provides an alternative, module-theoretic perspective on the theory of medial Latin quandles; cf.\ Remark \ref{rmk:midpoint}.

\subsection{Medial Latin quandles}
Henceforth, let $\Med$ denote the category of medial Latin quandles and quandle homomorphisms, and recall that $\L$ denotes the ring of integral Laurent polynomials in $t$ and $1-t$. We show that $\Med$ and $\AM_\L$ are equivalent.

\subsubsection{}
As noted in \cite{burrows,nelson,jedlicka}, the data of a nonempty object \(M\) in $\AM_\L$ is equivalent to the data of an abelian group automorphism $\phi\in\Aut(M)$ such that the map $\id-\phi$ is invertible.\footnote{More explicitly, the correspondence is given by \(t\pinv x \leftrightarrow \varphi\pinv(x)\) and \((1-t)\pinv x\leftrightarrow (\id-\phi)\pinv(x)\) for all \(x\in M\).} So, define a functor \(\Alex\colon \AM_\L\to\Med\) on objects by sending \(M\) to the induced Alexander quandle \(\Alex(M,\phi)\). Let \(\Alex\) fix all morphisms as set-theoretic maps. We begin with two auxiliary results.

\begin{prop}\label{lemma:alex-ftr}
    The assignment \(\Alex\colon \AM_\L\to\Med\) is a functor.
\end{prop}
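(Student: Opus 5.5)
We must check three things: that $\Alex$ lands in $\Med$ on objects, that every morphism of $\AM_\L$ is sent to a quandle homomorphism, and that identities and composition are preserved. The last point is immediate, since $\Alex$ fixes all morphisms as set-theoretic maps. Also $\Alex(\varnothing)=\varnothing$, which is trivially a medial Latin quandle, and any map out of $\varnothing$ is vacuously a homomorphism. So we may restrict attention to nonempty affine modules.

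\textbf{Objects.} Let $M$ be a nonempty $\L$-module, with $\phi$ the automorphism given by multiplication by $t$. Then $\id-\phi$ is multiplication by $1-t$, hence invertible. By Example \ref{ex:alex}, $\Alex(M,\phi)$ is a Latin quandle. Mediality follows from additivity of $\phi$ and $\id-\phi$. Indeed,
\[
(w\ast x)\ast(y\ast z)=\phi^2(w)+\phi(\id-\phi)(x)+(\id-\phi)\phi(y)+(\id-\phi)^2(z),
\]
and swapping $x$ and $y$ gives the same expression, because $\phi$ commutes with $\id-\phi$. Alternatively, one can cite that Alexander quandles are medial.

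\textbf{Morphisms.} This is the only substantive step. Let $f\colon M\to N$ be an affine transformation, so $f=g+c$ with $g$ an $\L$-module homomorphism and $c\in N$ constant. Since $g$ is $\L$-linear, it commutes with multiplication by $t$ and by $1-t$. Hence
\[
f(x\ast y)=g(tx+(1-t)y)+c=t\,g(x)+(1-t)g(y)+c.
\]
On the other hand,
\[
f(x)\ast f(y)=t(g(x)+c)+(1-t)(g(y)+c)=t\,g(x)+(1-t)g(y)+c,
\]
where the two copies of $c$ combine because their coefficients sum to $1$. The two expressions agree, so $f$ is a quandle homomorphism $\Alex(M)\to\Alex(N)$.

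The only potential obstacle is the constant part of $f$, and it is handled exactly by the identity $t+(1-t)=1$. This is the affine-invariance that justifies working in $\AM_\L$ rather than in the category of $\L$-modules.
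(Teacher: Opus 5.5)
Your proof is correct and follows the paper's argument. The paper also writes $f=T+z$ with $T$ an $\L$-linear map, uses $T\circ\phi=\psi\circ T$, and computes $f(x\ast y)=\psi(f(x)-f(y))+f(y)=f(x)\ast f(y)$; this is your computation, with the constant absorbed through the difference $x-y$ rather than through $t+(1-t)=1$, and your extra checks that objects land in $\Med$ and that identities and composites are preserved are ones the paper leaves implicit.
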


\begin{proof}
    We only have to show that every affine transformation of $\L$-modules \(f\colon M\to N\) is also a quandle homomorphism \(f\colon \Alex(M,\phi)\to\Alex(N,\psi)\). If \(M\) is empty, then the claim is trivial. Otherwise, \(f\) has the form \(f=T+z\) for some \(\L\)-linear map \(T\colon M\to N\) and some constant \(z\in N\). In particular, \(T\circ\phi = \psi\circ T\), so
    \begin{align*}
    f(x\ast y)&= f(\phi(x-y)+y) \\
   &= (T\circ\phi)(x-y)+T(y)+z\\
   &= (\psi\circ T)(x-y) + f(y)\\
   &= \psi(f(x)-f(y)) + f(y)\\
   &= f(x)\ast f(y)
    \end{align*}
    for all \(x,y\in M\). Hence, \(f\) is a quandle homomorphism.
\end{proof}

\begin{lemma}\label{lemma:linear}
    Let $M$ and $N$ be $\L$-modules. Then every $\Z[t]$-linear map $T\colon M\to N$ is $\L$-linear.
\end{lemma}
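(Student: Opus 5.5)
Since $\L$ is the localization $\Z[t][S^{-1}]$ of $\Z[t]$ at the multiplicative set $S$ generated by $t$ and $1-t$, I would deduce the statement from the standard fact that a $\Z[t]$-linear map between modules on which every element of $S$ acts invertibly automatically commutes with the inverses. Concretely, $T$ is additive and $T(tx)=tT(x)$ for all $x\in M$, so also $T((1-t)x)=(1-t)T(x)$. The ring $\L$ is generated over $\Z[t]$ by $t\inv$ and $(1-t)\inv$. It therefore suffices to check that $T(t\inv x)=t\inv T(x)$ and $T((1-t)\inv x)=(1-t)\inv T(x)$ for all $x\in M$. Once these hold, the set of $r\in\L$ with $T(rx)=rT(x)$ for all $x$ is a subring containing $\Z[t]$, $t\inv$ and $(1-t)\inv$, hence all of $\L$.

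For the key step, let $s\in\{t,1-t\}$ and $x\in M$, and set $y\coloneq s\inv x\in M$. By $\Z[t]$-linearity, $sT(y)=T(sy)=T(x)$. Since $s$ acts invertibly on $N$ (because $N$ is an $\L$-module), multiplying by $s\inv$ gives $T(s\inv x)=T(y)=s\inv T(x)$. This settles both generators.

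I do not expect a real obstacle. The only point requiring care is the closure claim: if $T$ commutes with $a$ and $b$, then it commutes with $a+b$ and $ab$, because $T(abx)=aT(bx)=abT(x)$. This is immediate, and together with the key step it shows that $T$ is $\L$-linear.
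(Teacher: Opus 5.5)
Your proof is correct and follows the same approach as the paper. The paper leaves this lemma to the reader, with the hint to use that $T$ commutes with $t$ and $1-t$ and that both act invertibly. You carry out exactly that: you show $T(s\inv x)=s\inv T(x)$ for $s\in\{t,1-t\}$, then observe that the elements of $\L$ commuting with $T$ form a subring containing these generators.
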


\begin{proof}
    Left to the reader; use the fact that $T$ commutes with $t$ and $1-t$ and the fact that $t$ and $1-t$ are invertible.
\end{proof}

\subsubsection{}
We prove the main result of this section.

\begin{thm}\label{thm:main}
    The functor $\Alex\colon \AM_\L\to\Med$ is an equivalence of categories.
\end{thm}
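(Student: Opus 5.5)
We show that $\Alex$ is essentially surjective, faithful, and full. Essential surjectivity comes from Proposition \ref{prop:every-ML}. Any nonempty medial Latin quandle is isomorphic to $\Alex(A,\phi)$ with $\id-\phi$ an automorphism of $A$, and such a pair $(A,\phi)$ is exactly a nonempty $\L$-module, by the footnoted correspondence. The empty quandle is the image of the empty affine module. Faithfulness is immediate, since $\Alex$ acts as the identity on underlying set maps.

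The substantive step, which I expect to be the main obstacle, is fullness. We must show that every quandle homomorphism $f\colon \Alex(M,\phi)\to\Alex(N,\psi)$ is an affine transformation of $\L$-modules. If $M$ is empty there is nothing to prove, and $N$ cannot be empty when $M$ is nonempty. So assume $M\neq\emptyset$, set $z\coloneq f(0)$ and $T\coloneq f-z$. Then $T(0)=0$, and $T$ is again a quandle homomorphism. Indeed, translation by $-z$ is a quandle automorphism of $\Alex(N,\psi)$, being affine with linear part $\id$, by Proposition \ref{lemma:alex-ftr}. It therefore suffices to show that a quandle homomorphism $T$ with $T(0)=0$ is $\L$-linear. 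By Lemma \ref{lemma:linear}, it is enough to show that $T$ is additive and commutes with $t$.

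Commuting with $\phi$ and with $\id-\phi$ is direct. From $x\ast 0=\phi(x)$ we get $T(\phi(x))=T(x)\ast T(0)=\psi(T(x))$. Similarly, $0\ast y=(\id-\phi)(y)$ gives $T\circ(\id-\phi)=(\id-\psi)\circ T$.

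For additivity, write $x+y$ as a single quandle product. Set $u\coloneq(\id-\phi)^{-1}$. Then
\[
\phi(\phi^{-1}x)+(\id-\phi)(u y)=x+y,
\]
so $x+y=\phi^{-1}(x)\ast u(y)$. Applying $T$ and using that $T$ intertwines $\phi^{\pm1}$ with $\psi^{\pm1}$ and $(\id-\phi)^{\pm1}$ with $(\id-\psi)^{\pm1}$ (inverting the relations above), we obtain
\[
T(x+y)=\psi(\psi^{-1}T(x))+(\id-\psi)((\id-\psi)^{-1}T(y))=T(x)+T(y).
\]
This is exactly where the Latin hypothesis is used: without invertibility of $\id-\phi$ we could not express $x+y$ as a product.

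Hence $T$ is additive and commutes with $t$, so it is $\Z[t]$-linear and hence $\L$-linear by Lemma \ref{lemma:linear}. Thus $f=T+z$ is an affine transformation, which proves fullness and completes the proof that $\Alex$ is an equivalence.
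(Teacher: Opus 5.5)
Your proof is correct and follows the paper's argument. Essential surjectivity comes from Proposition \ref{prop:every-ML}, faithfulness is trivial, and fullness goes via $T=f-f(0)$, the relation $T\circ\phi=\psi\circ T$, additivity, and Lemma \ref{lemma:linear}; you also justify, via Proposition \ref{lemma:alex-ftr} applied to a translation, the step the paper leaves to the reader that $T$ is a quandle homomorphism. The additivity step differs only in presentation: the paper introduces $g(x,y)=T(x+y)-T(x)-T(y)$, proves $g(a\ast b,c\ast d)=g(a,c)\ast g(b,d)$, and then specializes to exactly your choice $a=\phi^{-1}(x)$, $b=c=0$, $d=(\id-\phi)^{-1}(y)$, whereas you write $x+y=\phi^{-1}(x)\ast(\id-\phi)^{-1}(y)$ directly and use the intertwining with $\id-\phi$, which is a shorter version of the same idea.
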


\begin{proof}
    By Propositions \ref{prop:every-ML} and \ref{lemma:alex-ftr} and the discussion preceding Proposition \ref{lemma:alex-ftr}, \(\Alex\) is an essentially surjective functor. Clearly, $\Alex$ is also faithful. Therefore, it suffices to show that \(\Alex\) is full. To that end, let \(f\colon \Alex(M,\phi)\to\Alex(N,\psi)\) be a homomorphism of Latin Alexander quandles, and view \(M\) and \(N\) as \(\L\)-modules as in the discussion preceding Proposition \ref{lemma:alex-ftr}. 
    
    We have to show that $f$ is a morphism in $\AM_\L$. Let \(z\coloneq f(0)\), and define \(T\colon M\to N\) by \(T\coloneq f-z\). Since \(f=T+z\), we only have to show that \(T\) is \(\L\)-linear. By Lemma \ref{lemma:linear}, it suffices to show that \(T\) is a homomorphism of abelian groups such that \(T\circ\phi=\psi\circ T\). Clearly, \(T(0)=0\).

Since \(f\) is a quandle homomorphism, the reader can verify that \(T\) is also a quandle homomorphism. Equivalently,
\begin{equation}\label{eq:T}
    T(\phi(x-y)+y)=\psi(T(x))+(\id-\psi)(T(y))
\end{equation} for all \(x,y\in M\). In particular, given \(m\in M\), take \((x,y)\coloneq(\phi^{-1}(m),0)\) in \eqref{eq:T} to deduce that \(T=\psi\circ T\circ\phi^{-1}\). Therefore, \(T\circ\phi=\psi\circ T\), as desired.

It remains to show that $T$ distributes over addition. To that end, define \(g\colon M^2\to N\) by \[g(x,y)\coloneq T(x+y)-T(x)-T(y).\] We have to show that \(g\equiv 0\). Since \(\phi\) is an endomorphism of abelian groups and \(T\) is a quandle homomorphism, we first compute
    \begin{align*}
    T((a\ast b)+(c\ast d)) &= T(\phi(a)+(\id-\phi)(b)+\phi(c)+(\id-\phi)(d)) \\
    &= T(\phi(a+c)+(\id-\phi)(b+d)) \\
   &= T((a+c)\ast(b+d)) \\
   &= T(a+c)\ast T(b+d)
    \end{align*}
for all \(a,b,c,d\in M\). Since \(T\) is a quandle homomorphism and \(\psi\) is an endomorphism of abelian groups, it follows that
    \begin{align*}
    g((a\ast b),(c\ast d))&= T((a\ast b)+(c\ast d)) - T(a\ast b) - T(c\ast d) \\
   &= [T(a+c)\ast T(b+d)] - [T(a)\ast T(b)] - [T(c)\ast T(d)] \\
   &= \psi(T(a+c)-T(a)-T(c))+(\id-\psi)(T(b\ast d)-T(b)-T(d)) \\
   &= g(a,c)\ast g(b,d)
    \end{align*}
for all \(a,b,c,d\in M\). 

In particular, given \(x,y\in M\), let \[a\coloneq\phi^{-1}(x),\quad b\coloneq 0,\quad c\coloneq0,\quad d\coloneq(\id-\phi)^{-1}(y).\] Then \(x=a\ast b\) and \( y=c\ast d\), so the previous calculation yields \[g(x,y)=g((a\ast b),(c\ast d))=g(a,c)\ast g(b,d)=g(a,0) \ast g(0,d)=0\ast 0=0\] because \(g(-,0)\equiv 0\) and \(g(0,-)\equiv 0\). Hence, \(T\) is \(\L\)-linear, so $f$ is a morphism in $\AM_\L$.
\end{proof}

\subsection{Medial commutative quandles}
Let $\Comm$ denote the category of medial commutative quandles and quandle homomorphisms, and recall that $\D=\Z[1/2]$ denotes the ring of dyadic rationals. Motivated by the study of $\Comm$ in \cite{even}, we show that $\Comm$ and $\AM_\D$ are equivalent.

\begin{cor}\label{cor:main}
    The functor $\operatorname{mid}\colon \AM_\D\to\Comm$ defined by $M\mapsto M\avg$ is an equivalence of categories.
\end{cor}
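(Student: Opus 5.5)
The plan is to deduce Corollary \ref{cor:main} from Theorem \ref{thm:main} by exhibiting $\operatorname{mid}$ as a composite of fully faithful functors and then checking essential surjectivity separately. First, the element $1-t$ should go to $1/2$ under a ring map that sends $t\mapsto 1/2$. Hence there is a ring homomorphism $\pi\colon\L\to\D$ with $t\mapsto 1/2$, so $t\inv\mapsto 2$ and $(1-t)\inv\mapsto 2$. This map is surjective, since its image contains $1/2$ and $\Z$, and so contains all of $\D$. By Lemma \ref{lemma:res}, restriction of scalars $\operatorname{res}\colon\AM_\D\to\AM_\L$ is fully faithful.

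Second, I would check that $\Alex\circ\operatorname{res}=\operatorname{mid}$, followed by the inclusion $\Comm\inj\Med$. On a $\D$-module $M$, the action of $t$ on $\operatorname{res}M$ is $\phi=$ multiplication by $1/2$. So $\Alex(\operatorname{res}M,\phi)$ has
\[
x\ast y=\tfrac12 x+\tfrac12 y,
\]
which is exactly $M\avg$, as already noted in Remark \ref{rmk:medial}. Both functors fix morphisms as set maps, so the identification holds on morphisms too. Since midpoint quandles are commutative, and medial Latin by Lemma \ref{lem:comm-latin} and Remark \ref{rmk:medial}, $\operatorname{mid}$ does land in $\Comm$. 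The inclusion $\Comm\inj\Med$ is a full subcategory inclusion, because both categories use all quandle homomorphisms. Then $\operatorname{mid}$ composed with this inclusion equals $\Alex\circ\operatorname{res}$, which is fully faithful by Theorem \ref{thm:main}. A functor whose composite with a fully faithful inclusion is fully faithful is itself fully faithful, so $\operatorname{mid}$ is fully faithful.

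Third, essential surjectivity follows from Corollary \ref{cor:every-MC}: every nonempty medial commutative quandle is isomorphic to some $A\avg$ with $A$ a $\D$-module. The empty quandle is $\operatorname{mid}(\varnothing)$. Together these show that $\operatorname{mid}$ is an equivalence.

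The only step needing care is the second one: identifying $\Alex\circ\operatorname{res}$ with $\operatorname{mid}$ on the nose, and justifying that fullness passes to the full subcategory $\Comm$. Both amount to unwinding definitions, so I expect no real obstacle.
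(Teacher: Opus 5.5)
Your proposal is correct and is essentially the paper's proof. The paper likewise combines the surjection $\L\surj\L/(2t-1)\bij\D$ (your $t\mapsto 1/2$) with Lemma~\ref{lemma:res}, Theorem~\ref{thm:main}, Remark~\ref{rmk:medial}, and Corollary~\ref{cor:every-MC}. The only cosmetic difference is bookkeeping: the paper writes $\operatorname{mid}=\iota\circ\Alex\circ\operatorname{res}$, where $\iota$ is the inclusion of midpoint quandles into $\Comm$ (an equivalence by Corollary~\ref{cor:every-MC}), whereas you pass full faithfulness through the full inclusion $\Comm\inj\Med$ and check essential surjectivity separately.
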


\begin{proof}
    By Lemma \ref{lemma:res}, the surjective ring homomorphism
    \[
    \L\surj\L/(2t-1)\bij \D
    \]
    induces a fully faithful embedding $\operatorname{res}\colon \AM_\D\inj\AM_\L$. By  Theorem \ref{thm:main} and Remark \ref{rmk:medial}, the composite functor $\Alex\circ \operatorname{res}$ is an equivalence of categories from $\AM_\D$ to the category of midpoint quandles and quandle homomorphisms. By Corollary \ref{cor:every-MC}, the inclusion functor $\iota$ from the latter category into $\Comm$ is also an equivalence of categories. On the other hand, Remark \ref{rmk:medial} shows that $\operatorname{mid}\colon\AM_\D\to\Comm$ is the composite functor
    \[
    \operatorname{mid}=\iota\circ \Alex\circ \operatorname{res}.
    \]
    Hence, $\operatorname{mid}$ is an equivalence of categories.
\end{proof}

\begin{rmk}
    It is interesting to compare Corollary \ref{cor:main} with a certain theorem of Gr\o sfjeld \cite[][Thm.\ 5.2]{rack-roll}. Consider the category $\mathsf{Ab}(\mathsf{Qnd})$ of abelian group objects in the category of quandles, and let $\mathcal{C}$ be the full subcategory of commutative quandles in $\mathsf{Ab}(\mathsf{Qnd})$. One part of Gr\o sfjeld's result can be rephrased to state that the category of modules over $\D$ is equivalent to $\mathcal{C}$. Since all objects in $\mathsf{Ab}(\mathsf{Qnd})$ are also medial quandles (see \cite[][Lem.\ 4.3]{rack-roll}), Corollary \ref{cor:main} can be viewed as a basepoint-free or identity-free version of \cite[][Thm.\ 5.2]{rack-roll} in the sense that morphisms in $\AM_\D$ need not preserve identity elements.

    Another part of Gr\o sfjeld's result can be rephrased to state that the full subcategory of cocommutative quandles in $\mathcal{C}$ is equivalent to the category of vector spaces over $\Z/3$. This makes for an interesting comparison with Corollary \ref{cor:cocomm}; indeed, it is possible to show in purely quandle-theoretic terms that the order of every finite commutative kei is a power of $3$ (see \cite[][Thm.\ 5]{kei}).
\end{rmk}

\subsubsection{}
In particular, combining Theorem \ref{thm:main} and Corollary \ref{cor:main} with Lemmas \ref{lemma:ess-inj} and \ref{lemma:linear} yields the following.

\begin{cor}\label{cor:isom}
Two Latin Alexander quandles $\Alex(M,\phi)$ and $\Alex(N,\psi)$ are isomorphic if and only if $M$ and $N$ are isomorphic as $\L$-modules (or, equivalently, as $\Z[t]$-modules). In particular, two midpoint quandles are isomorphic if and only if they are isomorphic as modules over $\D$.
\end{cor}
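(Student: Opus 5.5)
The plan is to deduce the statement formally from the equivalences already established. For the first assertion, suppose $\Alex(M,\phi)\cong\Alex(N,\psi)$ as quandles. Both are objects of $\Med$, and by Theorem \ref{thm:main} the functor $\Alex\colon\AM_\L\to\Med$ is an equivalence. In particular it is fully faithful, so it reflects isomorphisms. Hence $M\cong N$ in $\AM_\L$, where $M$ and $N$ carry the $\L$-module structures described in the discussion preceding Proposition \ref{lemma:alex-ftr}. Since both are nonempty, Lemma \ref{lemma:ess-inj} upgrades this affine isomorphism to an isomorphism of $\L$-modules. Concretely, if $f=T+z$ is the affine isomorphism, then $T=f-z$ is an $\L$-linear bijection.

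Conversely, an $\L$-module isomorphism is in particular a morphism in $\AM_\L$ with an inverse in $\AM_\L$. Applying the functor $\Alex$ then gives a quandle isomorphism.

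For the parenthetical equivalence with $\Z[t]$-modules, one direction is restriction of scalars along $\Z[t]\to\L$. For the other, take a $\Z[t]$-isomorphism $T\colon M\to N$. Lemma \ref{lemma:linear} shows that $T$ and $T\inv$ are $\L$-linear, so $T$ is an $\L$-isomorphism.

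For the midpoint statement, I will run the same argument with Corollary \ref{cor:main} in place of Theorem \ref{thm:main}. The functor $\operatorname{mid}$ is an equivalence, hence reflects isomorphisms, so $M\avg\cong N\avg$ forces $M\cong N$ in $\AM_\D$. Lemma \ref{lemma:ess-inj} then gives a $\D$-module isomorphism, and the converse is again obtained by applying the functor.

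I expect no real obstacle. The only point needing a moment's care is the empty set, which the statement excludes implicitly since an Alexander quandle is built on a group. Beyond that, one must check that a quandle isomorphism between Latin Alexander quandles is really an isomorphism in $\Med$, which is immediate because $\Med$ is a full subcategory of quandles.
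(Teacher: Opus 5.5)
Your proposal is correct and follows the paper's route exactly. The paper obtains the corollary by combining the equivalences of Theorem \ref{thm:main} and Corollary \ref{cor:main} (which reflect isomorphisms) with Lemma \ref{lemma:ess-inj} to pass from affine isomorphisms to module isomorphisms, and with Lemma \ref{lemma:linear} for the $\Z[t]$ reformulation.
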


\begin{rmk}
    In 2003, Nelson \cite[][Thm.\ 2.1]{nelson} proved that two finite (not necessarily Latin) Alexander quandles \(\Alex(M,\phi)\) and \(\Alex(N,\psi)\) are isomorphic if and only if \((1-t)M\) and \((1-t)N\) are isomorphic as \(\mathbb{Z}[t\pinv]\)-modules. Since multiplication by \(1-t\) is invertible for Latin Alexander quandles, Corollary \ref{cor:isom} shows that Nelson's result still holds if the word ``finite'' is replaced with ``Latin.''
\end{rmk}

\begin{rmk}
    Corollary \ref{cor:isom} fails if we drop the assumption that \(\Alex(M,\phi)\) and \(\Alex(N,\psi)\) are Latin; by Example \ref{ex:alex}, this is equivalent to dropping the assumption that \(1-t\) is invertible in the ground ring. For example, Nelson \cite{nelson} showed that \(\Z[t\pinv]/(9,t-4)\) and \(\Z[t\pinv]/(9,t-7)\) are isomorphic as Alexander quandles but not as \(\Z[t\pinv]\)-modules. Accordingly, these are not \(\L\)-modules.
\end{rmk}

\section{Free objects}\label{sec:6}
As an application of Theorem \ref{thm:main} and Corollary \ref{cor:main}, we completely describe free medial Latin quandles and free medial commutative quandles. As another application, we obtain a structure theorem for finitely generated medial commutative quandles. This solves Question 7.3 and resolves Question 7.1 in \cite{BE}.

\subsection{Free affine modules}

First, we describe free affine modules over a ring $R$. Since affine modules over $R$ form a variety in the sense of universal algebra (see Remark \ref{rmk:lawvere}), free objects in $\AM_R$ exist and satisfy the usual universal property. 

Namely, the free affine $R$-module generated by a set $X$ is an object $\FAM_R(X)\in \AM_R$ along with an injective map \(\iota\colon X\inj \FAM_R(X)\) satisfying the following universal property: for all affine \(R\)-modules \(M\) and all set-theoretic functions \(f\colon X\to M\), there exists a unique affine transformation \(\Tilde{f}\colon \FAM_R(X)\to M\) such that \(\Tilde{f}\circ\iota=f\).

\subsubsection{}
More explicitly, we can construct $\FAM_R(X)$ as follows. If \(X\) is empty, then \(\FAM_R(X)\) is also empty. Otherwise, fix a basepoint \(z\in X\), and let \(\FAM_R(X)\coloneq R^{(X\setminus\{z\})}\) be the free left \(R\)-module generated by the set \(X\setminus\{z\}\). Define \(\iota\colon X\inj \FAM_R(X)\) by \(z\mapsto 0\) and \(x\mapsto x\) for all \(x\in X\setminus\{z\}\). The reader can verify that \(\FAM_R(X)\) satisfies the desired universal property with \(\Tilde{f}\coloneq T+z\), where \(T\colon \FAM_R(X)\to M\) is the unique $R$-linear map obtained by linearly extending the assignment \(x\mapsto f(x)-z\) for all \(x\in X\).

In particular, if \(X\) is a set of cardinality \(1\leq n<\infty\), then \(\FAM_R(X)\cong R^{n-1}\).

\subsection{Free medial Latin quandles}
Since equivalences of categories preserve universal properties, Theorem \ref{thm:main} and Corollary \ref{cor:main} allow us to construct all free objects in $\Med$ and $\Comm$.

Given a set $X$, we construct the free medial Latin quandle \(\FML(X)\) and the free medial commutative quandle $\FMC(X)$ generated by $X$ as follows. If $X$ is empty, then these quandles are also empty. Otherwise, fix a basepoint $z\in X$, and consider the associated free affine modules $\FAM_\L(X)$ and $\FAM_\D(X)$. We define
\[
\FML(X)\coloneq \Alex(\FAM_\L(X),\phi),\qquad \FMC(X)\coloneq \FAM_\D(X)\avg,
\]
where $\phi$ denotes multiplication by $t$. By the above discussion, $\FML(X)$ and $\FMC(X)$ satisfy the desired universal properties. 

In particular, since $\FAM_R(X)\cong R^{n-1}$ for all sets $X$ of cardinality $1\leq n<\infty$, we obtain the following description of nonempty finitely generated free objects in $\Med$ and $\Comm$.

\begin{prop}\label{prop:fml}
    Let $X$ be a set of cardinality \(1\leq n<\infty\). Then the free medial Latin quandle and the free medial commutative quandle generated by $X$ are given by
    \[
    \FML(X)\cong \Alex(\L^{n-1},\phi),\qquad\FMC(X)\cong (\D^{n-1})\avg,
    \]
    where $\phi$ denotes multiplication by $t$.
\end{prop}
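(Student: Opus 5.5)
The plan is to transport the universal property of free affine modules across the equivalences of Theorem \ref{thm:main} and Corollary \ref{cor:main}, and then to use the explicit description of $\FAM_R(X)$ for finite $X$.

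First we check that $\FML(X)=\Alex(\FAM_\L(X),\phi)$, with the map $\iota\colon X\inj\FAM_\L(X)$, is free in $\Med$. Take $Q\in\Med$ and a function $f\colon X\to Q$. By Proposition \ref{prop:every-ML} and essential surjectivity, there is an isomorphism $Q\cong\Alex(N,\psi)$ for some $N\in\AM_\L$, so we may assume $Q=\Alex(N,\psi)$. The universal property of $\FAM_\L(X)$ gives a unique affine transformation $\tilde f\colon\FAM_\L(X)\to N$ with $\tilde f\circ\iota=f$. By Proposition \ref{lemma:alex-ftr}, $\tilde f$ is a quandle homomorphism $\FML(X)\to Q$.

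For uniqueness, suppose $g\colon\FML(X)\to Q$ is any quandle homomorphism with $g\circ\iota=f$. Fullness of $\Alex$ (Theorem \ref{thm:main}) says $g$ is an affine transformation. Uniqueness in $\AM_\L$ then gives $g=\tilde f$. This step, namely that every quandle homomorphism is affine, is the only nontrivial point, and it is exactly the fullness part of Theorem \ref{thm:main}.

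The commutative case is the same argument with $\operatorname{mid}$ and Corollary \ref{cor:main} in place of $\Alex$ and Theorem \ref{thm:main}. Here Corollary \ref{cor:every-MC} supplies the essential surjectivity.

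Finally, suppose $|X|=n$ with $1\le n<\infty$. Fixing a basepoint $z\in X$ gives $\FAM_R(X)=R^{(X\setminus\{z\})}\cong R^{n-1}$ as $R$-modules. We use this with $R=\L$ and $R=\D$. For $R=\L$, the isomorphism is $\L$-linear, so it intertwines multiplication by $t$ on both sides. By Proposition \ref{lemma:alex-ftr} (or Corollary \ref{cor:isom}), it is therefore an isomorphism $\FML(X)\cong\Alex(\L^{n-1},\phi)$. The same reasoning applied to midpoint quandles gives $\FMC(X)\cong(\D^{n-1})\avg$.

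I do not expect any real obstacle. The only care needed is to handle the object $Q$ up to isomorphism, which is harmless because free objects are defined only up to unique isomorphism.
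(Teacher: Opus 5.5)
Your proposal is correct and follows the paper's route: the paper observes that the equivalences $\Alex$ and $\operatorname{mid}$ preserve universal properties (implicitly using that they act as the identity on underlying sets and maps), and then applies $\FAM_R(X)\cong R^{n-1}$. Your explicit check, with existence from essential surjectivity and functoriality and uniqueness from fullness, is just an unpacking of that one-line appeal.
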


\subsubsection{}
In the following corollary, the $n=2$ case provides a positive answer to \cite[][Question 7.3]{BE}.

\begin{cor}[{cf.\ \cite[][Question 7.3]{BE}}]\label{cor:free-comm}
    Let $X$ be a set of cardinality $1\leq n\leq 3$. Then the free commutative quandle generated by $X$ is isomorphic to the free medial commutative quandle $\FMC(X)\cong(\D^{n-1})\avg$ generated by $X$.
\end{cor}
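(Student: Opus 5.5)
The plan is to show that the free commutative quandle $F$ on $X$ (with $|X|=n\le 3$) is already medial. Once that is known, the rest is formal. The canonical quandle homomorphism $F\to\FMC(X)$ exists because $\FMC(X)$ is commutative. In the other direction, the universal property of $\FMC(X)$ in $\Comm$ gives a homomorphism $\FMC(X)\to F$ extending the identity on $X$. Both composites fix the generators, so each is the identity by the uniqueness clause of the relevant universal property. Hence $F\cong\FMC(X)$, and Proposition \ref{prop:fml} gives $\FMC(X)\cong(\D^{n-1})\avg$.

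It remains to show that $F$ is medial. I would use the known fact from quasigroup theory that commutative distributive quasigroups are precisely the commutative quandles; this is the identification of Lemma \ref{lem:comm-latin} and the footnote citing \cite{moufang,stanovsky}. I would then invoke the classical theorem (Fischer, or Kepka--N\v emec in \cite{moufang}) that a distributive quasigroup generated by at most three elements is medial. This is the step where a citation does the work: it is a deep result, essentially equivalent to the fact that commutative Moufang loops generated by two elements are abelian groups, via Belousov's correspondence.

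Concretely, I would proceed as follows. First, by Lemma \ref{lem:comm-latin}, $F$ is a commutative Latin quandle, hence a commutative distributive quasigroup generated by $X$. Second, I would apply the three-generator mediality theorem to conclude that $F$ is medial. For $n=1$ the claim is trivial, since $F$ is a point. For $n=2,3$ the mediality theorem is needed.

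I expect the main obstacle to be justifying the three-generator mediality. The distributive quasigroup $F$ corresponds to a commutative Moufang loop $(F,+)$, obtained by fixing a basepoint $e$ and setting $x+y=(x/e)\ast(e\backslash y)$. Choosing $e\in X$, this loop is generated by two elements. By Moufang's theorem, two-generated (indeed diassociative) commutative Moufang loops are associative, hence abelian groups. Belousov's theorem then makes $F$ an Alexander quandle over this abelian group, so $F$ is medial. If I prefer to avoid this route, I would cite \cite[Cor.\ 12.5]{moufang} directly, together with the fact that the non-medial examples $D(1)$ and $D(3)$ require four generators.
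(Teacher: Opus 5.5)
Your main line matches the paper's proof. You reduce, via the universal properties (i.e.\ Proposition~\ref{prop:fml}), to showing that the free commutative quandle $F$ on $X$ is medial. You then get this from the three-generator mediality theorem for commutative distributive quasigroups, which applies by Lemma~\ref{lem:comm-latin}. The paper cites that theorem as \cite[][Prop.\ 3.2]{kepka}, and with that citation your argument is complete.

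The self-contained justification in your last paragraph, however, contains a false step, and your fallback does not work.

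\textbf{The false step.} The loop $(F,+)$ with identity $e\in X$ is \emph{not} generated as a loop by the two remaining generators $a,b$. The quandle is recovered as $x\ast y=\phi(x+y)$ with $\phi=R_e$. This $\phi$ is a halving map ($\phi(x)+\phi(x)=x$), and it need not preserve the subloop $\langle a,b\rangle$.
\begin{itemize}
\item Already for $n=2$ one has $F\cong\D\avg$, whose loop at $0$ is $(\D,+)$, and this is not generated by $1$.
\item For $n=3$ the loop is $(\D^2,+)$, which is not even finitely generated.
\end{itemize}
So Moufang's theorem cannot be applied to $(F,+)$ as you state.

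\textbf{How to repair it.} Set $S_k\coloneq\langle\phi^k(a),\phi^k(b)\rangle$ for $k\geq 0$. Since $\phi^{-1}$ is doubling, you get $S_k\subseteq S_{k+1}$, $\phi(S_k)=S_{k+1}$ and $\phi^{-1}(S_k)\subseteq S_k$. Hence $U\coloneq\bigcup_k S_k$ is a subloop stable under $\phi^{\pm1}$. It is therefore a subquandle containing $X$, so $U=F$. Each $S_k$ is two-generated, hence an abelian group by Moufang's theorem. So the directed union $(F,+)$ is an abelian group, and $x\ast y=\phi(x)+(\id-\phi)(y)$ shows that $F$ is medial.

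\textbf{The fallback.} \cite[][Cor.\ 12.5]{moufang} only classifies commutative quandles of order at most $81$. It says nothing about non-medial commutative quandles of larger order, and $F$ itself is infinite. So knowing that $D(1)$ and $D(3)$ need four generators does not imply that $F$ is medial.
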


\begin{proof}
    By Proposition \ref{prop:fml}, it suffices to show that the free commutative quandle generated by $X$ is medial. Since \(1\leq n\leq 3\), this follows from \cite[][Prop.\ 3.2]{kepka}, which applies by Lemma \ref{lem:comm-latin}.
\end{proof}

\begin{rmk}
    This argument fails if $n\geq 4$ due to its reliance on \cite[][Prop.\ 3.2]{kepka}. In this case, we suspect that the the non-medial commutative quandles $D(1)$ and $D(3)$ from \cite{moufang} are quotients of the free commutative quandle generated by $X$. Since the former quandles are non-medial, this would imply that the latter quandle is also non-medial, thus making the claim fail for $n\geq 4$.
\end{rmk}

\subsection{Medial commutative rack decomposition}

As one last application of Corollary \ref{cor:main}, we obtain a structure theorem for finitely generated commutative medial quandles. This completely characterizes quandles for which the first part of \cite[][Question 7.1]{BE} has a positive answer.

Recall that $C_{2n+1}=(\Z/(2n+1))\avg$ denotes the cyclic midpoint quandle of order $2n+1$.

\begin{thm}[{cf.\ \cite[][Question 7.1]{BE}}]\label{thm:structure}
    Let \(X\) be a nonempty finitely generated rack. Then \(X\) is medial and commutative if and only if there exists a rack isomorphism \[X\cong (\D^r)\avg \oplus\bigoplus^n_{i=1} C_{2m_i+1}\] for some nonnegative integers \(r,n,m_1,\dots,m_n\geq 0\). 
    
    In particular, every nonempty finite medial commutative rack is isomorphic to a direct sum of cyclic midpoint quandles $C_{2m_i+1}$.
\end{thm}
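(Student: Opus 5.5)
The plan is to reduce everything to the structure theorem for finitely generated modules over the principal ideal domain $\D=\Z[1/2]$, using Corollary \ref{cor:main} and Corollary \ref{cor:isom} to translate between quandles and modules.

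For the reverse direction, note that a direct sum of midpoint quandles is again a midpoint quandle. Indeed,
\[
(M\avg)\oplus(N\avg)=(M\oplus N)\avg,
\]
since the operation on the product is computed componentwise. So the right-hand side is a midpoint quandle, hence medial and commutative by Remark \ref{rmk:medial}.

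For the forward direction, let $X$ be a nonempty, finitely generated, medial, commutative rack. I would proceed as follows.

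\begin{enumerate}
\item By Lemma \ref{lem:comm-latin}, $X$ is a quandle, so $X$ is an object of $\Comm$.
\item By Corollary \ref{cor:main}, $X\cong M\avg$ for some nonempty affine $\D$-module $M$.
\item Show that $M$ is finitely generated as a $\D$-module. Let $x_1,\dots,x_k$ be generators of $X$ as a rack. Closing under the quandle operations amounts to taking iterated $*$, and iterated $\overline{\ast}$, where $y\mathop{\overline{\ast}}x=2y-x$. Hence every element of $X$ is an affine combination of the $x_i$ with coefficients in $\D$. Translating so that $x_1=0$, the $\D$-submodule generated by $x_2,\dots,x_k$ contains all of $X$, so $M$ is generated by $k-1$ elements.
\item Since $\D$ is a localization of $\Z$, it is a PID. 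So $M\cong \D^r\oplus\bigoplus_i \D/(a_i)$ with $a_i$ nonzero nonunits of $\D$.
\item Each $a_i$ is, up to a unit $\pm 2^j$, an odd integer $2m_i+1>1$. Then $\D/(2m_i+1)\cong \Z/(2m_i+1)$, because $2$ is already invertible mod an odd number.
\item Taking midpoint quandles and applying Corollary \ref{cor:isom} together with the direct-sum observation above gives the claimed isomorphism.
\end{enumerate}

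The main obstacle is the finite-generation step. There one must carefully check that the rack generated by the $x_i$ coincides with the $\D$-affine span. One inclusion holds because the rack operations are affine $\D$-combinations, so the rack generated by the $x_i$ lies in their $\D$-affine span, and that span is a $\D$-submodule after translation. This inclusion alone suffices, since the rack generated by the $x_i$ is all of $X$. Also note that, unlike for $\Z$, the torsion part of $M$ has only odd-order summands.

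For the final statement, suppose $X$ is finite. A nonzero $\D^r$ is infinite, so $r=0$. Finite implies finitely generated, so the decomposition above applies with only cyclic midpoint summands. If $M=0$, then $X=C_1$, which is covered by allowing $m_i=0$.
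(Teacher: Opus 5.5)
Your proof is correct and follows the same route as the paper: Lemma \ref{lem:comm-latin} and Corollary \ref{cor:main} give $X\cong M\avg$, then the structure theorem over the PID $\D$ (with odd-order torsion) decomposes $M$, and Corollary \ref{cor:isom} transports the decomposition back. The only difference is how you show $M$ is finitely generated: you do it directly, by observing that $\ast$ and $\overline{\ast}$ are $\D$-affine combinations, whereas the paper cites Proposition \ref{prop:fml}, which amounts to writing $X$ as a quotient of the free object $\FMC$ on its generators, $\cong(\D^{k-1})\avg$, and using the equivalence to turn that surjection into a surjective affine map $\D^{k-1}\to M$.
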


\begin{proof}
    Note that $\D$ is a PID, and all finite modules over $\D$ are abelian groups of odd order. Therefore, the claim follows after combining the structure theorem for finitely generated modules over a PID with Lemma \ref{lem:comm-latin}, Corollaries \ref{cor:main} and \ref{cor:isom}, and Proposition \ref{prop:fml}.
\end{proof}

\begin{Backmatter}

\printaddress

\end{Backmatter}

\end{document}